\newtheorem{theorem}{Theorem}
\newtheorem{lemma}[theorem]{Lemma}
\theoremstyle{definition}
\newtheorem{definition}[theorem]{Definition}
\theoremstyle{remark}
\newtheorem{remark}[theorem]{Remark}
\numberwithin{equation}{section}
\begin{document}

\title[Bloch type spaces]
{Bloch type spaces on the unit ball of a Hilbert space}

\author[Z.  Xu]{Zhenghua Xu }
\thanks{This work was supported by the NNSF  of China (11371337, 11771412).}
\address{Zhenghua Xu, School of Mathematics, HeFei University of Technology, No. 420  Feicui Road, Hefei 230601, Anhui, China}
\email{zhxu$\symbol{64}$hfut.edu.cn}


\keywords{Bloch type space, Lipschitz space, Hardy-Littlewood theorem, Hilbert space}
\subjclass[2010]{Primary 32A18; Secondary  46E15}
\begin{abstract}
In this article, we   initiate the study of   Bloch type spaces on the unit ball of a Hilbert space. As applications, the Hardy-Littlewood theorem in    infinite-dimensional Hilbert spaces and  characterizations of some holomorphic function spaces related to the Bloch type space  are presented.
\end{abstract}
\maketitle
\section{Introduction}
The classical Bloch space  of holomorphic functions on the  unit disk $\mathbb{D}$ of  the complex plane $\mathbb{C}$  was  extended to the higher dimension  cases. In 1975, K. T. Hahn introduced  the notion of Bloch functions on bounded homogeneous domains in $\mathbb{C}^{n}$ using terminology   from differential geometry \cite{Hahn}. In \cite{Timoney1,Timoney2},  R. M.  Timoney  studied further Bloch functions  on bounded homogeneous domains in terms of the Bergman metric. In \cite{Krantz-Ma},  S. G. Krantz and  D. Ma considered  function theoretic and functional analytic properties  of Bloch functions on strongly pseudoconvex domain. To have a more complete insight on the theory of the Bloch space in the  finite dimensional  space, see  the book  \cite{Zhu} by Zhu.

Recently,  Bloch functions on the unit ball   of an infinite-dimensional complex Hilbert space  have been  studied by Blasco, Galindo   and Miralles \cite{Blasco}. In this article, we shall continue the study in \cite{Blasco} and consider Bloch type  spaces on the unit ball of a Hilbert space.  Especially, we  give four semi-norms of the Bloch type space  and   show their equivalences and present some other equivalent characterizations for Bloch functions from the  geometric perspective which are  the  infinite-dimensional   generalization of \cite[Theorem 3.4]{Timoney1}.

Let $\mathbb{B}$ be the open  unit ball of the complex Hilbert space $E$ and $\partial \mathbb{B}$ be the unit sphere. The
class of all holomorphic functions   $f:\mathbb{B} \rightarrow \mathbb{C}$  is denoted by $H(\mathbb{B})$.  Denote by Aut$(\mathbb{B})$ the group of all biholomorphic mappings of $\mathbb{B}$
onto itself.   For $0\leq\alpha <\infty$, let $H_{\alpha}^{\infty}$ be the space of holomorphic functions $f\in H(\mathbb{B})$ satisfying
  $$\sup_{x\in \mathbb{B}}(1-\|x\|^{2})^{\alpha} |f(x)|<\infty.$$
We abbreviate  $H^{\infty}=H_{0}^{\infty}$ for $ \alpha=0$.

 The classical $\alpha$-Bloch space   is the space of holomorphic  functions $F: \mathbb{D}  \rightarrow \mathbb{C}$ satisfying
$$\|F\|_{\mathcal{B}^{\alpha}(\mathbb{D})}:=\sup_{ z\in \mathbb{D}}( 1- |z|^{2})^{\alpha} |F'(z)| <+\infty.$$

Now we introduce four  semi-norms of  the Bloch type space  for $f\in H(\mathbb{B})$.

Denote
$$\|f\|_{1,\alpha}:=\sup_{x\in\mathbb{B}}  ( 1-\|x\|^{2})^{\alpha}\|D f(x)\| ,$$
$$\|f\|_{2,\alpha}:=\sup_{x\in\mathbb{B}}   ( 1- \|x\|^{2})^{\alpha}|\mathcal{R} f(x)|,$$
$$\|f\|_{3,\alpha}:=\sup_{y\in  \partial \mathbb{B}}    \|f_{y}\|_{\mathcal{B}^{\alpha}(\mathbb{D})},$$
where  $\mathcal{R} f(x)=D f(x)(x)$, $f_{y}(z)=f(zy)$ for $z\in \mathbb{D}$.

The   M\"{o}bius transforms of $\mathbb{B}$ are holomorphic mappings $\varphi_{a}, a\in \mathbb{B}$,   given by
 $$\varphi_{a}(x)=(P_{a}+s_{a}Q_{a})(m_{a}(x)),$$
 where $s_{a}=\sqrt{1-\|a\|^{2}}$, $P_{a}(x)= \frac{\langle x,a\rangle}{\langle a,a\rangle}a, Q_{a}=Id-P_{a}$
and $m_{a}(x)=\frac{a-x}{1-\langle x,a\rangle}.$

Define
$$\|f\|_{4,\alpha}:=\sup_{x\in \mathbb{B}}(1-\|x\|^{2})^{\alpha-1}\| \widetilde{\nabla} f(x)\|,$$
where $\widetilde{\nabla} f(x)=D  f \circ \varphi_{x}(0)$ with $\varphi_{x}\in$ Aut$(\mathbb{B})$.

Note that, by \cite[Lemma 3.5]{Blasco},
$$\| \widetilde{\nabla} f(x)\|=\sup_{w\neq0} \frac{( 1- \|x\|^{2})|D f(x)(w)|}{\sqrt{( 1- \|x\|^{2})\|w\|^{2}+|\langle w,x\rangle|^{2}}}.$$
Hence,  we have
$$\|f\|_{4,\alpha} =\sup_{x\in \mathbb{B}}\sup_{w\neq0} \frac{( 1- \|x\|^{2})^{\alpha}|D f(x)(w)|}{\sqrt{( 1- \|x\|^{2})\|w\|^{2}+|\langle w,x\rangle|^{2}}}.$$

For $\alpha>0$, denote
$${\mathcal{B}^{\alpha}}=\big\{ f\in H(\mathbb{B}):  \|f\|_{1,\alpha} <+\infty \big\},$$
and
$$ T_{\alpha}=\big\{ f\in H(\mathbb{B}):  \|f\|_{4,\alpha} <+\infty \big\}.$$

Equipped  with the norm 	$\|f\|_{\alpha}=
|f(0)|+\|f\|_{1,\alpha}$ for $f\in {\mathcal{B}^{\alpha}}$, the Bolch type space ${\mathcal{B}^{\alpha}}$ becomes a Banach space as usual.
Denote  the class of Bloch functions defined on $\mathbb{B}$ by  $\mathcal{B}$  instead of $\mathcal{B}^{1}$.  The little Bloch space will be denoted by $\mathcal{B}_{0}$ which  consists of functions $f\in \mathcal{B}$ such that
$$\lim_{\|x\|\rightarrow 1^{-}} \| \widetilde{\nabla} f(x)\|=0.$$

Now our  main results can be described  as follows.
\begin{theorem}\label{main-first}Let $\alpha>0$,  $\mathbb{B}$ be the open unit ball of the complex Hilbert space $E$.  If  $f$ is a complex-valued holomorphic function on $\mathbb{B}$, then the three semi-norms $\|f\|_{1,\alpha}$, $\|f\|_{2,\alpha}$ and $\|f\|_{3,\alpha}$ are equivalent.
\end{theorem}
\begin{theorem}\label{main-first1}
Let   $\mathbb{B}$ be the open unit ball of the complex Hilbert space $E$ with dim$E\geq2$ and  $f$ be a complex-valued holomorphic function
 on $\mathbb{B}$.
\begin{enumerate}
\item [(i)]  If $0<\alpha<1/2$, then   $f\in T_{\alpha}$ iff $f$ is constant.
\item [(ii)]   If  $\alpha=1/2$ , then $f\in T_{1/2}$ iff
$|D f(x)(y)| $  is bounded for all $x\in \mathbb{B}$ and $y\in \partial \mathbb{B}$ with $\langle x,y\rangle=0$.
\item [(iii)]  If $\alpha>1/2$, then  two seminorms $\|f\|_{1,\alpha}$ and $\|f\|_{4,\alpha}$   are equivalent.
\end{enumerate}

\end{theorem}

Note that the results for $\alpha=1$ in Theorems \ref{main-first} and
\ref{main-first1} have been obtained in \cite{Blasco} and  the
 condition dim$E\geq2$ in    Theorem  \ref{main-first1}
(i) and (ii)  can not be deleted in general.

From Theorem \ref{main-first} and its proof, the interested reader   can give some equivalent characterizations for  the little Bloch type space $\mathcal{B}_{0}$.   It is worth mentioning that the approach for the finite-dimensional case  depends   usually on the  integral representation  for holomorphic functions. However, it may fail for the infinite-dimensional  setting. Hence  we need to  overcome the restriction of dimension in achieving    our main results.

In this paper, we add some more equivalent characterizations for Bloch functions. To this end, we now give the  definition of a schlicht disk for the holomorphic  function defined in Hilbert spaces as in the case of several complex variables.

\begin{definition}Let   $f$ be a  holomorphic  function on a domain $\Omega$ in $E$.  For $z_{0}\in \mathbb{C}$, the disk
$$D(z_{0},r)= \{z \in \mathbb{C} : |z-z_0|<r\} $$
is called a schlicht disk in the range of $f$ if
there exists a  holomorphic  mapping   $g:\mathbb{D} \rightarrow \Omega $ such  that $ f\circ g(z)=z_0+rz$.
\end{definition}

Following the paper \cite{Lehto},  a   holomorphic function  $f:\mathbb{B}\rightarrow \mathbb{C}$ is said to be  normal  if
$$ M_{f}=\sup\Big\{\frac{(1-\|x\|^{2})\|Df(x)\|}{1+|f(x)|^{2}}:x\in \mathbb{B}\Big\} <+\infty.$$

With these two concepts in hand,   our second  main result can be established  as follows.
\begin{theorem}\label{second}
Let $f\in H(\mathbb{B})$.  Then the following conditions  are equivalent:\begin{enumerate}
\item [(i)] $f$ is a Bloch function;
\item [(ii)] the radii of schlicht disks in the range of $f$ are bounded above;
  \item[(iii)]
  the family $$\big\{ f\circ g :g\in H(\mathbb{D},\mathbb{B}) \big\}$$
  is a family of Bloch functions with uniformly bounded Bloch norm;
 \item [(iv)]
  the family $$\big\{ f\circ g- f\circ g(0) :g\in H(\mathbb{D},\mathbb{B})\big \}$$ is a normal family in the sense of Montel;
 \item   [(v)]
     the family
 $$  \mathcal{F}_{f}:=\{h=f\circ\phi-f\circ\phi(0) :   \phi\in \text{Aut}(\mathbb{B})\} $$
is a family of  normal functions such that  $M_{h}$ is uniformly bounded.

 \end{enumerate}
\end{theorem}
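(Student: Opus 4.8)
The plan is to establish the cycle of implications
$(i)\Rightarrow(iii)\Rightarrow(iv)\Rightarrow(ii)\Rightarrow(i)$ and
$(i)\Leftrightarrow(v)$, exploiting throughout the one-variable reduction already
available from Theorem~\ref{main-first}. The key elementary fact is that for any
$g\in H(\mathbb{D},\mathbb{B})$ the composition $f\circ g$ lies in
$\mathcal{B}^{1}(\mathbb{D})$ with
$\|f\circ g\|_{\mathcal{B}^{1}(\mathbb{D})}\leq C\|f\|_{\mathcal{B}}$; this follows
by differentiating, writing $(f\circ g)'(z)=Df(g(z))(g'(z))$, and comparing with
the invariant gradient using the Schwarz--Pick lemma for $g$ (which gives
$\|g'(z)\|\lesssim (1-|z|^{2})^{-1}(1-\|g(z)\|^{2})$ after the usual estimate, or
more precisely bounds $\|g'(z)\|$ in terms of the Bergman-type quantity appearing
in the formula for $\|\widetilde{\nabla}f\|$). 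Combined with the fact that
$\mathrm{Aut}(\mathbb{B})\subset H(\mathbb{D},\mathbb{B})$ composed with disk
embeddings, this immediately yields $(i)\Rightarrow(iii)$, and taking $g$ to range
over automorphisms restricted to complex lines recovers the definition of
$\|f\|_{4,1}$, so that conversely a uniform Bloch bound for the family forces
$f\in\mathcal{B}$; hence $(i)\Leftrightarrow(iii)$.

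For $(iii)\Rightarrow(iv)$ I would use Marty's/Montel's theorem: a family of
holomorphic functions on $\mathbb{D}$ with uniformly bounded Bloch seminorm and
normalized to vanish at $0$ is uniformly bounded on compact subsets (integrate the
Bloch estimate $|F(z)|\leq |F(0)|+\|F\|_{\mathcal{B}^{1}(\mathbb{D})}\log\frac{1}{1-|z|}$
along a radius), hence normal. For $(iv)\Rightarrow(ii)$: if some schlicht disk
$D(z_{0},r)$ sits in the range of $f$ via $f\circ g(z)=z_{0}+rz$, then $g\in
H(\mathbb{D},\mathbb{B})$ and $f\circ g-f\circ g(0)=rz$; for the family in (iv) to
be normal (in particular locally bounded), the linear functions $rz$ that arise
must have $r$ bounded, which is exactly (ii) — one uses that an unbounded sequence
of such $r$'s produces a subsequence with no locally uniformly convergent
subsequence. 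For $(ii)\Rightarrow(i)$ I would argue contrapositively and
geometrically, in the spirit of the classical Bloch/Landau theorem: if
$\|f\|_{\mathcal{B}}=\infty$, pick $x_{n}\in\mathbb{B}$ and unit vectors with
$(1-\|x_{n}\|^{2})^{1/2}|Df(x_{n})(w_{n})|/\sqrt{\cdots}\to\infty$, restrict $f$ to
the analytic disk through $x_{n}$ in the direction dual to $w_{n}$, precompose with
the disk automorphism sending $0$ to the appropriate point, and apply the
one-variable Bloch--Landau theorem to the resulting functions on $\mathbb{D}$ to
manufacture schlicht disks of arbitrarily large radius in the range of $f$.

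For the equivalence with $(v)$: the relation between normality in the sense of
Lehto (finiteness of $M_{h}$) and the Bloch condition is that $M_{h}\leq
\|h\|_{\mathcal{B}}$ trivially (drop the $1+|f|^{2}$ in the denominator), while the
reverse requires the standard trick that if $h=f\circ\phi-f\circ\phi(0)$ and all
$M_{h}$ are uniformly bounded by $M$, then applying this to the whole family
$\mathcal{F}_{f}$ — which is invariant under further composition with automorphisms
up to the additive constant — lets one upgrade the $\frac{1}{1+|h|^{2}}$ weight to
the constant weight, because near any point one can pre-compose with an
automorphism moving that point to $0$ where $h$ vanishes. Conversely, if $f\in
\mathcal{B}$ then by the invariance of $\|\widetilde{\nabla}f\|$ under
$\mathrm{Aut}(\mathbb{B})$ every $h\in\mathcal{F}_{f}$ has the same Bloch seminorm
as $f$, whence $M_{h}\leq\|f\|_{\mathcal{B}}$ uniformly. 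I expect the main obstacle
to be $(ii)\Rightarrow(i)$: making the Bloch--Landau disk-construction work
uniformly in infinite dimensions requires care in choosing the right analytic disk
(the direction $w_{n}$ need not be comparable to $x_{n}$, so one must use the full
Bergman-metric expression for $\|\widetilde{\nabla}f(x_{n})\|$ and an appropriate
automorphism $\varphi_{x_{n}}$), together with controlling that the constructed
disk genuinely lies in the range with the claimed radius. The other implications
are softer normal-families arguments, and the $(i)\Leftrightarrow(v)$ equivalence
is essentially the infinite-dimensional transcription of the classical
Bloch/normal-function dictionary.
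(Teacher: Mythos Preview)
Your proposal is correct and rests on the same two key ingredients as the paper: the Schwarz--Pick inequality for maps $g\colon\mathbb{D}\to\mathbb{B}$ and the one-variable Bloch constant. The organization differs: the paper proves $(i)\Leftrightarrow(ii)$ directly first, then deduces $(ii)\Leftrightarrow(iii)$ from the observation that the schlicht disks in the range of $f$ are exactly the schlicht disks arising in the ranges of the one-variable functions $f\circ g$, $g\in H(\mathbb{D},\mathbb{B})$ (together with the bounds $R\leq Q_{f}\leq R/B$ just obtained); it defers $(iii)\Leftrightarrow(iv)$ to the classical one-variable argument in Timoney; and it handles $(i)\Leftrightarrow(v)$ essentially as you do. Your route $(i)\Rightarrow(iii)$ via Schwarz--Pick and $(iv)\Rightarrow(ii)$ via the non-normality of $\{r_{n}z\}$ when $r_{n}\to\infty$ is a perfectly good alternative packaging of the same content.

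The one place where you overestimate the difficulty is $(ii)\Rightarrow(i)$. No contrapositive, no delicate choice of direction $w_{n}$, and no ``full Bergman-metric expression'' is needed. The paper's argument is two lines: for arbitrary $x\in\mathbb{B}$ and $y\in\partial\mathbb{B}$, apply the classical Bloch theorem to the one-variable function $z\mapsto f\circ\varphi_{x}(zy)$ to produce a schlicht disk in the range of $f$ of radius $B\,|D(f\circ\varphi_{x})(0)(y)|$; if all schlicht radii are $\leq R$, this gives $\|\widetilde{\nabla}f(x)\|=\sup_{y}|D(f\circ\varphi_{x})(0)(y)|\leq R/B$ for every $x$, hence $f\in\mathcal{B}$. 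This replaces the construction you flagged as the main obstacle.
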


\begin{remark}
The Bloch space  on bounded symmetric domains in  arbitrary complex  Banach spaces was considered  in \cite{Deng}.
Although we treat only  Bloch functions defined   on the unit ball of a Hilbert space, all results in Theorem \ref{second} can be proved  in the more general setting of bounded symmetric domains.
\end{remark}

The remaining part of this paper is organized as follows.   Theorems \ref{main-first}, \ref{main-first1}  and \ref{second} are proved in Section $2$. In  Section $3$,  the Hardy-Littlewood theorem in infinite-dimensional Hilbert spaces is established as an application  of  Theorem  \ref{main-first}. In addition, we give some equivalent characterizations  for holomorphic function spaces related to the Bloch type space which are the  generalizations of main results in \cite{Dai,Zhao} in  infinite-dimensional Hilbert  spaces.

\section{Proof of   Theorems \ref{main-first}, \ref{main-first1} and \ref{second}}
Throughout this paper, denote by $C$ an absolute positive constant and
 by $C(\alpha)$   a positive constant depending on $\alpha$ only. They may have different values at different places.

In order to prove Theorems \ref{main-first} and \ref{main-first1}, we
establish two lemmas.
 \begin{lemma}\label{Ouyang1}
 Let $\alpha\geq0$ and  $f:\mathbb{B} \rightarrow  \mathbb{C} $  be a holomorphic function.
 \begin{enumerate}
\item [(i)] If $|f(x)|\leq ( 1- \|x\|^{2})^{-\alpha} $ for all $ x\in \mathbb{B}$, then
 $$|Df(x)(y)|\leq C(\alpha) (1-\|x\|^{2})^{-\alpha-\frac{1}{2}},$$
for all  $x\in \mathbb{B}, y\in \partial \mathbb{B}$ with $\langle x,y\rangle=0$.
\item [(ii)] If $|Df(x)(y)|\leq ( 1- \|x\|^{2})^{-\alpha}$ for all  $x\in \mathbb{B}, y\in \partial \mathbb{B}$ with $\langle x,y\rangle=0$, then
 $$|\mathcal{R}f(x)|\leq C(\alpha) (1-\|x\|^{2})^{-\alpha-\frac{1}{2}}, \ \ \forall x\in \mathbb{B}.$$
\item [(iii)]
 If $|f(x)|\leq ( 1- \|x\|^{2})^{-\alpha}$ for all  $x\in \mathbb{B}$, then
 $$|\mathcal{R}f(x)|\leq C(\alpha) (1-\|x\|^{2})^{-\alpha-1}, \ \ \forall x\in \mathbb{B}.$$
\end{enumerate}
 \end{lemma}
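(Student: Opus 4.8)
The plan is to reduce each of the three one‑variable estimates to the corresponding classical statement on the unit disk $\mathbb{D}$ by restricting $f$ to suitable analytic disks, and then to apply the Schwarz–Pick type / Cauchy estimates available there. The key device is that for a point $x\in\mathbb{B}$ and a direction $y$ with $\langle x,y\rangle=0$, the slice $z\mapsto f\big(\tfrac{x}{\|x\|}\,t(z)\big)$ or, more conveniently, a two‑dimensional slice spanned by $x/\|x\|$ and $y$ lets one convert a growth bound on $|f|$ into a bound on the relevant directional derivative.

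For part (i): fix $x\in\mathbb{B}$ with $a:=\|x\|$ and a unit vector $y\perp x$. Consider the holomorphic map $g:\mathbb{D}\to\mathbb{B}$ given by $g(\zeta)=x+\sqrt{1-a^{2}}\,\rho\,\zeta\,y$ for a radius $\rho\in(0,1]$ to be optimized; one checks $\|g(\zeta)\|^{2}=a^{2}+(1-a^{2})\rho^{2}|\zeta|^{2}<1$, so $g$ indeed maps into $\mathbb{B}$, and $1-\|g(\zeta)\|^{2}=(1-a^{2})(1-\rho^{2}|\zeta|^{2})$. Then $F:=f\circ g\in H(\mathbb{D})$ satisfies $|F(\zeta)|\le (1-a^{2})^{-\alpha}(1-\rho^{2}|\zeta|^{2})^{-\alpha}$, and $F'(0)=\sqrt{1-a^{2}}\,\rho\,Df(x)(y)$. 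Applying the Cauchy estimate $|F'(0)|\le \sup_{|\zeta|=r}|F(\zeta)|/r$ on a circle of radius $r$ (say $r=1/2$, then choosing $\rho=1$) gives $|Df(x)(y)|\lesssim_{\alpha}(1-a^{2})^{-\alpha-1/2}$, which is the claim. (One can instead optimize over $r$ and $\rho$ together, but a fixed choice already yields a constant $C(\alpha)$.)

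For part (iii): with $\mathcal{R}f(x)=Df(x)(x)$, restrict to the one‑dimensional slice $\varphi(\zeta)=\zeta\,x/\|x\|$ if $x\neq0$; more precisely set $F(\zeta)=f(\zeta x)$ for $\zeta\in\mathbb{D}_{1/\|x\|}$, so that $\mathcal{R}f(x)=F'(1)$ evaluated at $\zeta=1$. Since $|F(\zeta)|\le(1-|\zeta|^{2}\|x\|^{2})^{-\alpha}$, a Cauchy estimate on the circle $|\zeta-1|=\tfrac12(1-\|x\|)/\|x\|$, on which $1-|\zeta|^{2}\|x\|^{2}\gtrsim 1-\|x\|^{2}$, yields $|\mathcal{R}f(x)|=|F'(1)|\lesssim_{\alpha}(1-\|x\|^{2})^{-\alpha-1}$; the case $x=0$ is trivial since $\mathcal{R}f(0)=0$. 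For part (ii), decompose $\mathcal{R}f(x)=Df(x)(x)$: writing $x=\|x\|\,u$ with $u\in\partial\mathbb{B}$, the vector $x$ itself is the "radial" direction, so unlike (i)–(iii) the hypothesis is on derivatives in directions orthogonal to $x$, not on $f$ directly. The approach here is to write $Df(x)(x)$ as an integral of second‑order data, or better: apply the fundamental theorem of calculus along the slice $t\mapsto f(tx)$ to express $\mathcal{R}f(x)$ in terms of $\int_0^1 \tfrac{d}{dt}f(tx)\,\tfrac{dt}{t}$‑type quantities, and then at each $tx$ relate $Df(tx)(x)$ to derivatives in orthogonal directions via an auxiliary slice that mixes the radial direction with an orthogonal one; integrating the bound $(1-t^{2}\|x\|^{2})^{-\alpha-\text{something}}$ produces the exponent $-\alpha-1/2$. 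Alternatively one reduces (ii) to (i) applied after an automorphism $\varphi_x$, since $\widetilde{\nabla}$ is automorphism‑covariant and the orthogonality condition $\langle x,y\rangle=0$ is exactly what appears in the formula for $\|\widetilde{\nabla}f\|$.

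The main obstacle is part (ii): the hypothesis controls $Df$ only in the "tangential" directions $y\perp x$, whereas $\mathcal{R}f(x)=Df(x)(x)$ is a purely radial derivative, so one cannot simply plug $y=x/\|x\|$ into the hypothesis. The trick will be to use a two‑dimensional analytic disk through $x$ whose tangent vector at the relevant point is a genuine mixture of the radial and an orthogonal direction — for instance $g(\zeta)=\big((a+\beta\zeta)/(1+\bar\beta a\zeta)\big)u + (\text{orthogonal term})$, an automorphism of $\mathbb{D}$ in the first slot composed with a rotation — so that the $\zeta$‑derivative of $f\circ g$ picks up a nonzero multiple of $Df(x)(u)$ while only ever evaluating $Df$ on vectors that, at the boundary‑approaching points, are orthogonal to the argument; then the one‑variable Cauchy/Schwarz estimate plus the hypothesis closes the loop. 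Getting the orthogonality to hold exactly (not just approximately) along the chosen disk is the delicate point, and is precisely where the geometry of $\mathbb{B}$ in a Hilbert space — via the explicit automorphisms $\varphi_x$ — does the work.
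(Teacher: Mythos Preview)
Your arguments for (i) and (iii) are correct. For (i) you use a one–dimensional disk $g(\zeta)=x+\sqrt{1-\|x\|^{2}}\,\zeta\,y$ and a Cauchy estimate; the paper instead passes to the two–dimensional slice $F(z_1,z_2)=f(z_1 x'+z_2 y)$ on $B^2\subset\mathbb{C}^2$ (with $x=\|x\|x'$) and invokes Rudin's Lemma~6.4.6. These are the same idea, and your version is a bit more self-contained. For (iii) you give a direct radial--slice Cauchy estimate, whereas the paper simply chains (i) and (ii): from $|f|\le(1-\|x\|^2)^{-\alpha}$ part (i) gives a tangential bound with exponent $-\alpha-\tfrac12$, and then (ii) applied with $\alpha$ replaced by $\alpha+\tfrac12$ yields the radial exponent $-\alpha-1$.

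Part (ii), however, is not proved in your proposal. You correctly diagnose the obstacle --- the hypothesis controls only the complex--tangential derivatives $Df(x)(y)$ with $y\perp x$, while $\mathcal{R}f(x)=Df(x)(x)$ is purely radial --- but neither of your sketched routes is carried through, and neither is straightforward. The ``integrate along $t\mapsto f(tx)$'' idea does not help as stated, since $\tfrac{d}{dt}f(tx)=Df(tx)(x)$ is again radial at every $t$; you never get access to tangential data that way. The automorphism idea is also problematic: the hypothesis is on $Df$, not on $f$, so precomposing with $\varphi_a$ multiplies by $D\varphi_a$, and the orthogonality condition $\langle x,y\rangle=0$ does not transform into itself under $\varphi_a$ in any usable way.

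The paper's route is exactly the one you already used in (i): take the \emph{same} two--dimensional slice $F(z_1,z_2)=f(z_1x'+z_2y)$. On $B^2$ the hypothesis becomes a bound on the complex--tangential derivative $\bar z_2\,\partial_1 F-\bar z_1\,\partial_2 F$, and the conclusion is a bound on the radial derivative $z_1\,\partial_1 F+z_2\,\partial_2 F$; this tangential--to--radial gain of $\tfrac12$ in the exponent is a known estimate on the finite-dimensional ball (the paper cites Yang--Ouyang, Lemma~1(a)). So rather than designing an exotic analytic disk on which orthogonality holds exactly, you should reduce to $\mathbb{C}^2$ and quote (or reprove) the two--variable inequality there.
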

\begin{proof}
(i) Let  $x=rx'\in \mathbb{B}, y\in \partial \mathbb{B}$ be such that  $x'\in \partial \mathbb{B}$ with $\langle x,y\rangle=0$.
 Let us consider the holomorphic function $ F: B^{2}\rightarrow \mathbb{C}$ given by
$ F(z_{1},z_{2})=f(z_{1}x'+z_{2}y)$, where $B^{2}$ is the open unit ball of $\mathbb{C}^{2}$. By assumption, we get  $|F(z_{1},z_{2})|\leq ( 1- |z_{1} |^{2}-|z_{2} |^{2})^{-\alpha}$.  By \cite[Lemma 6.4.6]{Rudin}, it holds that
$$ \Big| \frac{\partial F}{\partial z_2}(r,0)\Big|\leq C(\alpha)( 1- r^{2})^{-\alpha-\frac{1}{2}}, $$
that is
$$|Df(x)(y)|\leq C(\alpha) (1-\|x\|^{2})^{-\alpha-\frac{1}{2}}.$$

(ii) Based on the  result for $\mathbb{C}^{2}$ (cf. \cite[Lemma 1(a)]{Yang}), we can obtain the desired estimate  applying  the same method as in (i). (iii) is just a corollary from (i) and (ii).
\end{proof}

\begin{lemma}\label{Ouyang2}
Let $\alpha\geq0$ and  $f:\mathbb{B} \rightarrow  \mathbb{C} $  be a holomorphic function.    If
  $$|D\mathcal{R}f(x)(y)|\leq   (1-\|x\|^{2})^{-\alpha-\frac{1}{2}},$$
   for all $x\in \mathbb{B}$ and $y\in \partial \mathbb{B}$ with $\langle x,y\rangle=0$,
 then $$|Df(x)(y)|\leq  C(\alpha) (1-\|x\|^{2})^{-\alpha},$$
 for all $x\in \mathbb{B}$ and $y\in \partial \mathbb{B}$ with $\langle x,y\rangle=0$.
\end{lemma}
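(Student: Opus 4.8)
The plan is to reduce the estimate to a one‑variable computation by slicing. Fix a unit vector $y\in\partial\mathbb{B}$ and a unit vector $x'$ with $\langle x',y\rangle=0$; since every $x$ with $\langle x,y\rangle=0$ is of the form $\|x\|x'$ with $x'=x/\|x\|\perp y$ (the value at $x=0$ being recovered by letting $\|x\|\to0$), it suffices to bound $|Df(zx')(y)|$ for $z\in\mathbb{D}$. Introduce the two‑variable slice $F(z_{1},z_{2})=f(z_{1}x'+z_{2}y)$ on $B^{2}\subset\mathbb{C}^{2}$ and set
$$h(z)=\frac{\partial F}{\partial z_{2}}(z,0)=Df(zx')(y),$$
which is holomorphic on $\mathbb{D}$; the goal is $|h(z)|\le C(\alpha)(1-|z|^{2})^{-\alpha}$.

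The crucial observation is the identity $D\mathcal{R}f(zx')(y)=\bigl(zh(z)\bigr)'$. Along the slice the radial derivative is $\mathcal{R}f(z_{1}x'+z_{2}y)=Df(z_{1}x'+z_{2}y)(z_{1}x'+z_{2}y)=z_{1}\frac{\partial F}{\partial z_{1}}+z_{2}\frac{\partial F}{\partial z_{2}}$, and differentiating in the $z_{2}$‑direction at the point $(z,0)$ gives
$$D\mathcal{R}f(zx')(y)=z\,\frac{\partial^{2}F}{\partial z_{1}\partial z_{2}}(z,0)+\frac{\partial F}{\partial z_{2}}(z,0)=zh'(z)+h(z).$$
Since $zx'\in\mathbb{B}$ and $\langle zx',y\rangle=0$ for every $z\in\mathbb{D}$, the hypothesis of the lemma yields $\bigl|(zh(z))'\bigr|\le(1-|z|^{2})^{-\alpha-1/2}$ for all $z\in\mathbb{D}$.

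Now write $\phi(z)=zh(z)$, so that $\phi(0)=0$ and, integrating along the radius, $\phi(z)=\int_{0}^{1}\phi'(tz)\,z\,dt$; hence
$$|h(z)|=\Bigl|\frac{\phi(z)}{z}\Bigr|\le\int_{0}^{1}\bigl(1-t^{2}|z|^{2}\bigr)^{-\alpha-1/2}\,dt .$$
It remains to bound this integral by $C(\alpha)(1-|z|^{2})^{-\alpha}$. Setting $r=|z|$ and substituting $u=tr$ turns it into $\frac1r\int_{0}^{r}(1-u^{2})^{-\alpha-1/2}\,du$; handling $r<\tfrac12$ separately (there every quantity is bounded and $(1-r^{2})^{-\alpha}\ge1$) and using $1-u^{2}\ge1-u$ for $r\ge\tfrac12$, one is left with the elementary estimate of $\int_{0}^{r}(1-u)^{-\alpha-1/2}\,du$, which is $O(1)$ for $0\le\alpha<\tfrac12$, $O\bigl(\log\tfrac1{1-r}\bigr)$ for $\alpha=\tfrac12$, and $O\bigl((1-r)^{1/2-\alpha}\bigr)$ for $\alpha>\tfrac12$; in each case this is $\le C(\alpha)(1-r^{2})^{-\alpha}$, since $\log\tfrac1{1-r^{2}}\lesssim(1-r^{2})^{-1/2}$ and $(1-r)^{1/2-\alpha}\le(1-r)^{-\alpha}\le 2^{\alpha}(1-r^{2})^{-\alpha}$. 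Taking $z=r=\|x\|$ completes the proof.

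The only step that is not entirely routine is the structural identity $D\mathcal{R}f(zx')(y)=(zh(z))'$: it is precisely what converts a bound on the derivative of $\mathcal{R}f$ in the direction tangential to $x$ into a bound on $Df$ itself after a single radial integration. The subsequent integral estimate is standard and runs parallel to the one‑variable arguments underlying Lemma \ref{Ouyang1}; alternatively, one could deduce the lemma from the corresponding statement on $B^{2}\subset\mathbb{C}^{2}$, but the slicing argument given here is self‑contained.
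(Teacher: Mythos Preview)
Your proof is correct and follows essentially the same route as the paper: both establish the integral identity $r\,Df(rx')(y)=\int_{0}^{r}D\mathcal{R}f(tx')(y)\,dt$ (you via the slice computation $(zh(z))'=D\mathcal{R}f(zx')(y)$, the paper via the homogeneous expansion $f=\sum P_n$) and then bound the resulting one-variable integral. The only cosmetic difference is in the final estimate: the paper multiplies by $(1-r^{2})^{\alpha}$ before integrating, using $(1-r^{2})^{\alpha}(1-t^{2})^{-\alpha-1/2}\le(1-t)^{-1/2}$ to avoid your case split on $\alpha$.
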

 \begin{proof}
 For  $f\in H(\mathbb{B})$, we   rewrite it as $\sum_{n=0}^{\infty} P_{n}(x)$, where $P_{n}$ is an $n$-homogeneous polynomial, that is, the restriction to the diagonal of a continuous $n$-linear form on the $n$-fold space $E\times \cdots\times E$. Then $ \mathcal{R} P_{n}=nP_{n}$ and $DP_{n}$ is $(n-1)$-homogeneous $ (n\geq1)$, so that
 $$(D\mathcal{R}P_{n})(tx')= n(D P_{n})(tx')=nt^{n-1}(D P_{n})(x'),$$
 where $ x' \in \partial B_{E}, 0\leq t<1$.

Hence,
 $$\int_{0}^{r}(D\mathcal{R}P_{n})(tx')dt=r^{n}(D P_{n})(x')=r(D P_{n})(rx'),$$
 which leads to
 $$r D f(rx')(y)=  \int_{0}^{r} (D\mathcal{R}f)(tx')(y) dt.$$
It follows that, by assumption, for $r\in [1/2,1)$, $y\in \partial \mathbb{B}$ with $\langle x',y\rangle=0$,
 $$|D f(rx')(y)|\leq 2\int_{0}^{r}|(D\mathcal{R}f)(tx')(y)|dt
 \leq  2\int_{0}^{r}(1-t^{2})^{-\alpha-\frac{1}{2}}dt,$$
 then
  $$(1-\|x\|^{2})^{\alpha}|D f(x)(y)|
 \leq  2\int_{0}^{r}(1-t)^{-\frac{1}{2}}dt=4(1-\sqrt{1-r})<4,$$
 for $\|x\| \in [1/2,1)$, $y\in \partial \mathbb{B}$ with $\langle x,y\rangle=0$.

 Hence, by the maximum principle for holomorphic functions,
 $$|D f(x)(y)|\leq 4 (\frac{4}{3})^{\alpha},$$
  for $\|x\| \in  [0,1/2]$, $y\in \partial \mathbb{B}$ with $\langle x,y\rangle=0$,
 as desired.
 \end{proof}

We now   are in a position to prove Theorems \ref{main-first} and \ref{main-first1}.
\begin{proof}[Proof of Theorem \ref{main-first}]
Let $x\in \mathbb{B}$ be fixed. For  any  $y\in \partial \mathbb{B}$, by the projection theorem, we can write $y=z_{1}x+z_{2}x_{1}$ for  some  $z_{1}, z_{2}\in \mathbb{C}, x_{1}\in\partial \mathbb{B}$ with  $\langle x,  x_{1}\rangle=0$.

Note that
$$|z_{1}|^{2}\|x\|^{2}+|z_{2}|^{2}=\|y\|^{2}=1.$$
Hence, for $1/2\leq\|x\|<1$, we have
\begin{eqnarray}\label{condition-1}
|Df(x)(y)| \leq 2|\mathcal{R}f(x)|+|Df(x)(x_{1})|.
\end{eqnarray}

Suppose that $\|f\|_{2,\alpha}=1$, by Lemma \ref{Ouyang1} (i), then we have
$$|D\mathcal{R}f(x)(x_{1})| \leq  C(\alpha) (1-\|x\|^{2})^{-\alpha-\frac{1}{2}},$$
so by Lemma  \ref{Ouyang2}
\begin{eqnarray}\label{condition-2}
 |Df(x)(x_{1})| \leq  C(\alpha) (1-\|x\|^{2})^{-\alpha}.
\end{eqnarray}
Hence, by (\ref{condition-1}) and (\ref{condition-2}), for $1/2\leq\|x\|<1$,
$$\|Df(x)\|=\sup_{y\in \partial \mathbb{B}}|Df(x)(y)|  \leq  (2+C(\alpha)) (1-\|x\|^{2})^{-\alpha}. $$
By the maximum principle for holomorphic mappings in Banach spaces,
we have
 $$ \|f\|_{1,\alpha}\leq  2+C(\alpha). $$
Consequently,
\begin{eqnarray}\label{condition-3}
  \|f\|_{1,\alpha}\leq C(\alpha) \|f\|_{2,\alpha}.
\end{eqnarray}
It is clear that
 \begin{eqnarray}\label{condition-4}
 \|f\|_{2,\alpha}\leq \|f\|_{1,\alpha}.
\end{eqnarray}
Hence the two seminorms $\|\cdot\|_{1,\alpha}$  and $\|\cdot\|_{2,\alpha}$ are equivalent by  (\ref{condition-3}) and  (\ref{condition-4}).

Notice that $zf_{y}'(z) = \mathcal{R}f(zy)$ for any holomorphic $f$ defined on $\mathbb{B}$,  then we have that
 \begin{eqnarray}\label{condition-5}
 \|f\|_{2,\alpha} \leq \| f\|_{3,\alpha}.
\end{eqnarray}
For $1/2\leq |z|<1$, we have
$$(1-|z|^{2})^{\alpha}|f_{y}'(z)| \leq 2(1-|z|^{2})^{\alpha}|\mathcal{R}f(zy)|.$$
Hence
$$\sup_{|z|\geq 1/2 }(1-|z|^{2})^{\alpha}|f_{y}'(z)|
\leq 2\sup_{\|x\|\geq 1/2 }(1-\|x\|^{2})^{\alpha}|\mathcal{R}f(x)|
\leq 2\|f\|_{2,\alpha}.$$
Combining  this with the    maximum principle for holomorphic functions, one can show that
 \begin{eqnarray}\label{condition-6}
  \|f\|_{3,\alpha} \leq 2(\frac{4}{3})^{\alpha}\|f\|_{2,\alpha}.
\end{eqnarray}

 Hence, by   (\ref{condition-5}) and  (\ref{condition-6}), the two semi-norms $\| \cdot\|_{2,\alpha}$  and $\|\cdot\|_{3,\alpha}$ are equivalent.
\end{proof}
\begin{remark}
From the proof of Theorem \ref{main-first}, we see that   semi-norms $\| f\|_{2,\alpha}$  and $\|f\|_{3,\alpha}$ are equivalent for any  holomorphic function $f$ defined on the unit ball of  Banach spaces.
\end{remark}

\begin{proof}[Proof of Theorem \ref{main-first1}]
(i) Let  $0<\alpha< 1/2 $ and   $f\in T_{\alpha}$. It holds that
$$|Df(x)(y)|\leq \|f\|_{4,\alpha} ( 1- \|x\|^{2})^{\frac{1}{2}-\alpha},$$
for all $x\in \mathbb{B}$ and $y\in \partial \mathbb{B}$ with $\langle x,y\rangle=0$.

Now let us show first that
\begin{eqnarray}\label{Holo}
 g(x):=D f(x)(y) \equiv 0,\quad   x\in \mathbb{B}\  {\rm{and}} \ y\in E  \ {\rm{with}} \ \langle x,y\rangle=0.
\end{eqnarray}

To this end, for fixed $x\in \partial \mathbb{B}$ and $y\in E$ such that  $\langle x,y\rangle=0$, we consider  the slice function $h(z)=g(zx)$ on $\mathbb{D}$ satisfying  the property
$$ \limsup_{z \rightarrow \partial \mathbb{D}} |h(z)|=0.$$
 Applying  the maximum principle to the holomorphic mapping  $h$,  we see that $h(z)\equiv0$ on  $\mathbb{D}$, then (\ref{Holo}) follows.

Let $x\in \mathbb{B}$ be fixed. For every $w\in \partial \mathbb{B}$,   by the projection theorem, we can write $w=zx+y$ with   $\langle x,  y\rangle=0$ for some $z\in \mathbb{C}, y\in \mathbb{B}$. Hence, it follows that
\begin{eqnarray}\label{-}
 |z| \|x\|\leq \|w\| =1.
\end{eqnarray}
Now we have, by  (\ref{Holo}),
$$|D f(x)(w) | \leq | Df(x)(zx)|+| Df(x)(y)|=   |z|  | \mathcal{R} f(x)|.$$
Combining this with  (\ref{-}), we obtain
$$  \|Df(x)\| =\sup_{w\in \partial \mathbb{B}} |D f(x)(w) |\leq \frac{1}{\|x\|} | \mathcal{R} f(x)| \leq \|D f(x)\|, $$
which forces that  $Df(x)$ and $\overline{x}$ are complex linear.

Note that $D f: \mathbb{B}\rightarrow E^{\ast}$ is  holomorphic and thus $D f(x)\equiv0 $ on $\mathbb{B}$. Hence,   $f$ is constant,  as desired.

(ii) Suppose that $|D f(x)(y)|\leq C$    for all $x\in \mathbb{B}$ and $y\in \partial \mathbb{B}$ with $\langle x,y\rangle=0$. Let us show that
$f\in T_{1/2}$.

By  Lemma \ref{Ouyang1} (ii), it follows that
\begin{eqnarray}\label{Rf}
 |\mathcal{R}f(x)|\leq C( 1- \|x\|^{2})^{-\frac{1}{2}}, \quad \forall x\in \mathbb{B}.
\end{eqnarray}
For fixed  $x\in \mathbb{B}$, every $w\in E$ can be decomposed as  $w=zx+y$ with $\langle x,y\rangle = 0$ for some $z\in \mathbb{C}$, then
 $$( 1- \|x\|^{2})\|w\|^{2}+|\langle w,x\rangle|^{2} = |z|^{2}\|x\|^{2}+( 1- \|x\|^{2})\|y\|^{2}.$$
It follows that
\begin{eqnarray*}
 && \frac{|Df(x)(w)|( 1- \|x\|^{2})^{1/2}}{\sqrt{( 1- \|x\|^{2})\|w\|^{2}+|\langle w,x\rangle|^{2}}}
 \\
 &\leq& \frac{|z| \|\mathcal{R} f(x)\|( 1- \|x\|^{2})^{1/2}}{ \sqrt{|z|^{2}\|x\|^{2}+( 1- \|x\|^{2})\|y\|^{2}}} + \frac{|D f(x)(y)|( 1- \|x\|^{2})^{1/2}}{ \sqrt{|z|^{2}\|x\|^{2}+( 1- \|x\|^{2})\|y\|^{2}}}
 \\
 &\leq& \frac{1}{\|x\|}|\mathcal{R}f(x)|( 1- \|x\|^{2})^{1/2}   + |D f(x)(\frac{y}{\|y\|})|.
 \end{eqnarray*}
Combining  this with (\ref{Rf}), we obtain that, for $1/2\leq\|x\|<1$,
\begin{eqnarray}\label{Rf1}
\frac{|Df(x)(w)|( 1- \|x\|^{2})^{1/2}}{\sqrt{( 1- \|x\|^{2})\|w\|^{2}+|\langle w,x\rangle|^{2}}}\leq C.
\end{eqnarray}

Furthermore, $\|Df(x)\| $ is bounded  for $\|x\|< 1/2$   by the maximum principle for  holomorphic mappings. Then
\begin{eqnarray}\label{Rf2}
 \frac{|Df(x)(w)|( 1- \|x\|^{2})^{1/2}}{\sqrt{( 1- \|x\|^{2})\|w\|^{2}+|\langle w,x\rangle|^{2}}}\leq
 \|Df(x)\| \leq C.
\end{eqnarray}
 Hence, by (\ref{Rf1}) and (\ref{Rf2}),  $f\in T_{1/2}$.

Conversely,  suppose   $f\in T_{1/2}$, then we have that,  for $x\in \mathbb{B}$ and $y\in \partial \mathbb{B}$ with $\langle x,y\rangle=0$,
$$ |D f(x)(y)|=\frac{|Df(x)(y)|( 1- \|x\|^{2})^{1/2}}{\sqrt{( 1- \|x\|^{2})\|y\|^{2}+|\langle y,x\rangle|^{2}}}.
$$
Hence, $|D f(x)(y)|$ is bounded.

(iii) Note that  $T_{\alpha} \subseteq {\mathcal{B}^{\alpha}}$. It remains to   show  ${\mathcal{B}^{\alpha}} \subseteq T_{\alpha}$ for
$\alpha>\frac{1}{2}$.
Let us  show first  that if the holomorphic function  $F:B^{2}\rightarrow \mathbb{C}  $ satisfies
 $$\|F\|_{\mathcal{B}^{\alpha}(B^{2})}=\sup_{z=(z_{1},z_{2})\in B^{2} }(1- |z |^{2})^{ \alpha}\|DF(z)\| <+\infty,$$
then
\begin{eqnarray}\label{TM}
 \big|\frac{\partial F}{\partial z_{2}}(z_{1},0)\big|(1-  |z_{1} |^{2})^{ \alpha-\frac{1}{2}} \leq  C(\alpha) \|F\|_{\mathcal{B}^{\alpha}(B^{2})}, \quad \forall  z_{1}\in \mathbb{D}.
\end{eqnarray}
To this end, set $s=\frac{1}{\sqrt{3}}(1-|z_{1}|^{2})^{\frac{1}{2}} $ for  $z_{1}\in \mathbb{D}$. It holds that
$$\frac{\partial ^{2} F}{\partial z_{2} \partial z_{1}}(z_{1},0)
=\frac{1}{2\pi i} \int_{|w|=s } \frac{\partial   F}{\partial z_{1}} (z_{1},w) \frac{d w}{w^{2}}, $$
then
$$\Big|\frac{\partial ^{2} F}{\partial z_{2} \partial z_{1}}(z_{1},0)\Big|\leq \frac{\|F\|_{\mathcal{B}^{\alpha}(B^{2})}}{s(1-|z_{1}|^{2}-s^{2})^{\alpha}}
=\frac{3^{\alpha+\frac{1}{2}}\|F\|_{\mathcal{B}^{\alpha}(B^{2})}}
{2^{\alpha}(1-|z_{1}|^{2})^{\alpha+\frac{1}{2}}}\leq\frac{3^{\alpha
+\frac{1}{2}}\|F\|_{\mathcal{B}^{\alpha}(B^{2})}}
{2^{\alpha}(1-|z_{1}|)^{\alpha+\frac{1}{2}}}. $$
Combining  with the formula
$$\frac{\partial   F}{\partial z_{2}  }(z_{1},0)
-\frac{\partial   F}{\partial z_{2}  }(0,0)
= z_{1} \int_{0}^{1} \frac{\partial ^{2} F}{\partial z_{2} \partial z_{1}} (tz_{1},0) dt, $$
we obtain
$$\Big| \frac{\partial   F}{\partial z_{2}  }(z_{1},0)\Big|\leq
\Big|\frac{\partial   F}{\partial z_{2}  }(0,0)\Big|+
\frac{3^{\alpha+\frac{1}{2}}\|F\|_{\mathcal{B}^{\alpha}(B^{2})}}
{2^{\alpha}(\alpha-\frac{1}{2})} \big[ (1-|z_{1}| )^{-\alpha+\frac{1}{2}}-1 \big].$$
Hence, for $\alpha>\frac{1}{2}$,
$$(1-|z_{1}| )^{\alpha-\frac{1}{2}}\Big| \frac{\partial   F}{\partial z_{2}  }(z_{1},0)\Big|
\leq\big[1+\frac{3^{\alpha+\frac{1}{2}}}
{2^{\alpha}(\alpha-\frac{1}{2})} \big]\|F\|_{\mathcal{B}^{\alpha}(B^{2})},$$
then (\ref{TM}) follows.
Based on this  result and  applying the method used in Lemma \ref{Ouyang1} (i),   we can easily obtain  that, for $f \in {\mathcal{B}^{\alpha}}$,
$$(1-\|x\|^{2})^{\alpha-\frac{1}{2}}|Df(x)(y)|\leq C(\alpha) \|f\|_{1,\alpha},$$
for   $x\in \mathbb{B}, y\in \partial \mathbb{B}$ with $\langle x,y\rangle=0$.

Notice that,for  any $w=zx+y\in E$ with $z\in \mathbb{C},
x\in \mathbb{B}$ and $y\in   E$   such that  $\langle x,y\rangle = 0$,
$$ \frac{|Df(x)(w)|( 1- \|x\|^{2})^{ \alpha}} {\sqrt{( 1-\|x\|^{2})\|w\|^{2}+|\langle w,x\rangle|^{2}}}
\leq   \|Df(x)\|( 1- \|x\|^{2})^{\alpha}   + \Big|D
(x)(\frac{y}{\|y\|})\Big|( 1- \|x\|^{2})^{\alpha-\frac{1}{2}},$$
which shows   ${\mathcal{B}^{\alpha}} \subseteq T_{\alpha}$ for $\alpha>\frac{1}{2}$, as desired.
 \end{proof}
\begin{proof}[Proof of Theorem \ref{second}]
(i)$\Rightarrow$(ii) Suppose that  $f$ is a Bloch function on $\mathbb{B}$.
Let $D(z_{0},r)$ be a schlicht disk in the range of $f$. Then there exists
a holomorphic function  $g:\mathbb{D} \rightarrow \mathbb{B} $ such  that
$$ f\circ g(z)=z_0+rz.$$
Applying  the Schwarz lemma  for   holomorphic functions  (cf.  \cite[p. 287]{Graham-Kohr}),  we have
$$ \frac{ ( 1- \|g(0)\|^{2})\|Dg(0)\|^{2}+|\langle g(0),Dg(0)\rangle|^{2} }{( 1- \|g(0)\|^{2})^{2}}\leq1.$$
Let $g(0)=x $ and $ Dg(0)=w$. Thus
$$r=|Df(x)(w)|\leq \frac{|D f(x)(w)|( 1- \|x\|^{2})}{\sqrt{( 1- \|x\|^{2})\|w\|^{2}+|\langle w,x\rangle|^{2}}} \leq \|\widetilde{\nabla} f(x)\|,$$
which shows that the radii of the schlicht disks in the range of $f$ are bounded above by $Q_{f}:=\sup_{x\in \mathbb{B}} \|\widetilde{\nabla} f(x)\|$.

(ii)$\Rightarrow$(i) Suppose the radii of the schlicht disks in the
range of $f$ are bounded above by $R$.
For any fixed $y\in \partial\mathbb{B}$,  define $g:\mathbb{D} \rightarrow \mathbb{B} $ by $g(z) = zy$. Fix $x\in \mathbb{B}$. By Bloch's theorem, the holomorphic function
$f\circ \varphi_{x}\circ g$   has a schlicht disk in its range of radius
$$B|(f\circ \varphi_{x}\circ g)'(0)|=B|Df\circ \varphi_{x}(0)(y)|,$$
 where $B$ denotes Bloch's constant.

Therefore, by assumption,   it follows that
$$ |Df\circ \varphi_{x}(0)(y)|\leq \frac{R}{B},$$
for all $x\in \mathbb{B}$ and $y\in \partial\mathbb{B}$, thus $f$ is a Bloch function,  as desired.

(ii)$\Leftrightarrow$(iii)
In the proofs  above, we have
$$   R \leq Q_{f} \leq \frac{R}{B}.$$
Note  that the schlicht disks in the range of
$f$ are exactly those disks which are schlicht disks in the range of $f\circ g$ for some  $g\in  H(\mathbb{D},\mathbb{B})$.
The desired result follows.

(iii)$\Leftrightarrow$(iv)
Following the same  arguments as in (6)$\Leftrightarrow$(7) in \cite[Theorem 3.4]{Timoney1}, one can  prove our result
 and we omit its details here.

(v)$\Rightarrow$(i)
Note that any  $h\in \mathcal{F}_{f}$ satisfies $h(0)=0$. By hypothesis, we have
$$ \big\{ \|Df\circ\phi(0)\|:  \phi\in \text{Aut}(\mathbb{B})\big\}= \big\{ \|Dh(0)\|:  h\in \mathcal{F}_{f}\big\} $$
is bounded, as claimed.

(i)$\Rightarrow$ (v) From the inequality
$$\|Df\circ\phi(x)\|=\|Df\circ\phi\circ \varphi _{x} (0) \big(D\varphi_{x}(0)\big)^{-1}\|\leq \frac{Q_{f}}{1-\|x\|^{2}},$$
it follows that
the family
 $   \mathcal{F}_{f} $
is a family of  normal functions such that   $M_{h}$ is uniformly bounded above by $Q_{f}$. Now the proof  is complete.

\end{proof}

\section{Applications}
Hardy and Littlewood \cite{HL} gave a characterization of
the holomorphic  Lipschitz space $\Lambda_{\alpha}(\mathbb{D})$ of order ${\alpha}\in (0,1]$ on the unit open disk $\mathbb{D}$ of  $\mathbb{C}$, which states that
  a holomorphic  function  $f$   on $\mathbb{D}$ satisfies
  $$  \sup\limits_{\stackrel{z,w\in \mathbb{D}}{z\neq w}}
\frac{|f(z)-f(w)|}{|z-w|^\alpha}<+\infty$$
    if and only if
\begin{eqnarray}\label{Thm:hardy-Littlewood-disc}
\sup_{z\in \mathbb{D}}(1-|z|^{2})^{1-\alpha}|f'(z)| <+\infty. \end{eqnarray}

In \cite{Krantz}, Krantz extended  this result  to  harmonic functions.  See also the  version of the Hardy-Littlewood theorem for quaternionic slice regular functions \cite{Ren-Xu}. As an application of Theorem \ref{main-first}, we first establish the Hardy-Littlewood theorem in the infinite-dimensional Hilbert space.

Denote
$$ {\rm  Lip}\alpha:=\Big\{f\in  H(\mathbb{B}): \sup\limits_{\stackrel{x,y\in \mathbb{B}}{x\neq y}}
\frac{|f(x)-f(y)|}{\|x-y\|^\alpha}<+\infty\Big\}.$$
\begin{theorem}
Let   $\alpha\in (0,1]$. Then   ${\rm  Lip}\alpha =\mathcal{B}^{1-\alpha}$.
\end{theorem}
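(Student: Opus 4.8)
The plan is to prove the two inclusions $\mathrm{Lip}\alpha\subseteq\mathcal{B}^{1-\alpha}$ and $\mathcal{B}^{1-\alpha}\subseteq\mathrm{Lip}\alpha$ separately; write $[f]_{\alpha}=\sup_{x\neq y}\|x-y\|^{-\alpha}|f(x)-f(y)|$ for the $\mathrm{Lip}\alpha$ seminorm. Since adding a constant to $f$ affects neither $[f]_{\alpha}$ nor $\|f\|_{1,1-\alpha}$, we may assume $f(0)=0$ throughout. The endpoint $\alpha=1$ (with $\mathcal{B}^{0}$ read as $\{f:\sup_{x}\|Df(x)\|<\infty\}$) is degenerate: integrating $Df$ along line segments gives one inclusion, a one-variable Cauchy estimate gives the other, and no case analysis is needed; so we assume $0<\alpha<1$ below.

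\emph{The inclusion $\mathrm{Lip}\alpha\subseteq\mathcal{B}^{1-\alpha}$} is where Theorem~\ref{main-first} enters. If $f\in\mathrm{Lip}\alpha$, then each slice $f_{y}$, $y\in\partial\mathbb{B}$, satisfies $|f_{y}(z)-f_{y}(w)|\leq[f]_{\alpha}\|zy-wy\|^{\alpha}=[f]_{\alpha}|z-w|^{\alpha}$, so $f_{y}\in\mathrm{Lip}\alpha(\mathbb{D})$ with seminorm $\leq[f]_{\alpha}$; by the one-dimensional Hardy--Littlewood theorem~\eqref{Thm:hardy-Littlewood-disc}, applied with a constant that is uniform in $y$, this gives $\|f\|_{3,1-\alpha}=\sup_{y\in\partial\mathbb{B}}\|f_{y}\|_{\mathcal{B}^{1-\alpha}(\mathbb{D})}<\infty$, and then Theorem~\ref{main-first} yields $\|f\|_{1,1-\alpha}\leq C(\alpha)\|f\|_{3,1-\alpha}<\infty$, i.e.\ $f\in\mathcal{B}^{1-\alpha}$. (A more self-contained route is to apply the one-variable Cauchy estimate directly to the slices $\zeta\mapsto f(x+\frac{1}{2}(1-\|x\|)\zeta w)$ on $\mathbb{D}$, $\|w\|=1$, obtaining $\|Df(x)\|\leq 2^{1-\alpha}[f]_{\alpha}(1-\|x\|)^{\alpha-1}$.)

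\emph{The inclusion $\mathcal{B}^{1-\alpha}\subseteq\mathrm{Lip}\alpha$} is the substantive part. Put $M=\|f\|_{1,1-\alpha}$, so $\|Df(x)\|\leq M(1-\|x\|)^{\alpha-1}$; a first radial integration gives $\|f\|_{\infty}\leq M/\alpha$. Fix $x,x'$ with $\|x'\|\leq\|x\|$ and set $d=1-\|x\|$, $\rho=\|x-x'\|$. I would distinguish three regimes. When $\rho\leq d$, convexity of the norm keeps the segment $[x',x]$ inside $\{\|z\|\leq\|x\|\}$, so $1-\|z\|\geq d$ there, and integrating $Df$ along it gives $|f(x)-f(x')|\leq M\rho\,d^{\alpha-1}\leq M\rho^{\alpha}$. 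When $\rho\geq\frac{1}{2}$, the bound follows at once from $\|f\|_{\infty}<\infty$. The delicate regime is $d<\rho<\frac{1}{2}$: here $x$ may lie close to $\partial\mathbb{B}$ while $x$ and $x'$ are far apart, the segment $[x',x]$ grazes the sphere, and the first estimate collapses. In this case I would route through the interior, replacing $x,x'$ by their radial projections $p,p'$ onto the sphere of radius $1-\rho$ (well-defined since $\|x\|,\|x'\|>1-2\rho>0$), and bound $|f(x)-f(p)|$ and $|f(p')-f(x')|$ by integrating $Df$ along rays and $|f(p)-f(p')|$ by integrating along $[p,p']$, which stays in $\{\|z\|\leq 1-\rho\}$ so that $\|Df\|\leq M\rho^{\alpha-1}$ there. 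The estimates use only $1-\|x'\|<2\rho$, the subadditivity $a^{\alpha}-b^{\alpha}\leq(a-b)^{\alpha}$ for $a\geq b\geq 0$, and the standard bound $\|u-u'\|\leq 2\|x-x'\|/\|x\|$ for the normalizations $u=x/\|x\|$, $u'=x'/\|x'\|$; summing them gives $|f(x)-f(x')|\leq C(\alpha)M\rho^{\alpha}$ in all three regimes, whence $f\in\mathrm{Lip}\alpha$.

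I expect the third regime to be the only real obstacle: the Lipschitz estimate genuinely cannot be read off from the straight segment when both points hug the boundary, so one is forced to interpose the radial projections onto the sphere of radius $1-\rho$ and to keep simultaneous control of $d$, $\rho$, $1-\|x'\|$ and the angular separation of $x$ and $x'$. The remaining ingredients — the reduction to $f(0)=0$, the boundedness $\|f\|_{\infty}\leq M/\alpha$, the Cauchy estimates on one-dimensional slices, and the endpoint $\alpha=1$ — are routine.
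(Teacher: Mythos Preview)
Your proof is correct. For the inclusion $\mathrm{Lip}\,\alpha\subseteq\mathcal{B}^{1-\alpha}$ you follow essentially the paper's route: restrict to one-dimensional slices, invoke the classical Hardy--Littlewood theorem, and then use Theorem~\ref{main-first} to pass from a slice-wise bound back to $\|f\|_{1,1-\alpha}$. The only cosmetic difference is that you land on $\|f\|_{3,1-\alpha}$ while the paper lands on $\|f\|_{2,1-\alpha}$ via the radial derivative; both are equivalent by Theorem~\ref{main-first}.

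The genuine divergence is in the inclusion $\mathcal{B}^{1-\alpha}\subseteq\mathrm{Lip}\,\alpha$. The paper does not argue this at all here: it simply cites an external result (Lemma~3.6 of \cite{Xu}). You instead supply a self-contained elementary proof by integrating $Df$ along well-chosen paths, splitting into the three regimes $\rho\leq d$, $\rho\geq\tfrac12$, and $d<\rho<\tfrac12$, and in the last case detouring through the radial projections onto the sphere of radius $1-\rho$. This buys independence from the external reference and makes the geometric reason for the Lipschitz bound transparent; the paper's approach is of course shorter on the page. Your identification of the third regime as the only nontrivial one is accurate, and the estimates there (the subadditivity of $t\mapsto t^{\alpha}$, the bound $\|u-u'\|\leq 2\|x-x'\|/\|x\|$, and the fact that $\|x\|>1-\rho$ forces $(1-\rho)/\|x\|<1$ so that $\|p-p'\|\leq 2\rho$) all check out.
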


\begin{proof}
Taking  the same arguments as in \cite[Lemma 6.4.8]{Rudin}, one can show easily the inclusion $\mathcal{B}^{1-\alpha} \subseteq    $ Lip$ \alpha$.

Conversely, let $f \in $Lip$ \alpha$ and $x \in \partial \mathbb{B}$.
 Let us consider the holomorphic function $ F: \mathbb{D}\rightarrow \mathbb{C}$ given by $ F(z)=f(zx)$, which is in  $\Lambda_{\alpha}(\mathbb{D})$.  By the classical Hardy-Littlewood theorem, we have
  $$\sup_{z\in \mathbb{D}}(1-|z|^{2})^{1-\alpha}|F'(z)| <+\infty,$$
 which implies that
 $$\sup_{x\in \mathbb{B}}(1-\|x\|^{2})^{1-\alpha}|\mathcal{R}f(x)| <+\infty.$$
From Theorem \ref{main-first}, it follows that $f\in \mathcal{B}^{1-\alpha}$.
\end{proof}
Holland and Walsh \cite{Holland-Walsh}  further considered  Hardy-Littlewood theorem  to
the limit case $\alpha=0$ for
holomorphic  Bloch spaces and proved that a holomorphic  function  $f$   on $\mathbb{D}$ satisfies
$$\sup_{z\in \mathbb{D}}(1-|z|^{2})|f'(z)| <+\infty $$
if and only if
\begin{eqnarray}\label{Thm:Holland-Wolsh}
\sup\limits_{\stackrel{z,w\in \mathbb{D}}{z\neq w}}\sqrt{(1-|z|^{2})(1-|w|^{2})}
\Big|\frac{f(z)-f(w)}{z-w}\Big|<+\infty.
\end{eqnarray}

The   extensions   to higher dimensions of  the Holland-Walsh result as in (\ref{Thm:Holland-Wolsh}) were obtained  in \cite{Nowak,RenTu} for holomorphic functions in the unit ball of $\mathbb{C}^{n}$. Later, Pavlovi\'c found   that the   Holland-Walsh result holds  even for an arbitrary  $C^1$-function defined on the unit ball of $\mathbb{R}^{n}$ \cite{PHolland}.

In \cite{Zhao} Zhao gave  a  characterization of holomorphic Bloch type spaces on the unit ball of $ \mathbb{C}^{n}$.
\begin{theorem}\label{Zhao} Let $0<\alpha \leq 2$. Let $\lambda$ be any real number satisfying the following properties:
 \begin{enumerate}
\item [(i)]$0 \leq\lambda \leq \alpha$ if $0< \alpha<1 $;
\item [(ii)] $0 <\lambda<1$ if $\alpha=1 $;
\item [(iii)]$\alpha-1\leq\lambda \leq 1$ if $1<\alpha\leq 2 $.
 \end{enumerate}
 Then a holomorphic function $f$ on the open unit ball  $B^{n}$ of $\mathbb{C}^{n}$ is such that
 $$\sup_{z\in {B}^{n}} (1-|z|^{2})^{\alpha}|\nabla f(z)| <+\infty$$
if and only if
$$ \sup_{z,w\in {B}^{n}, z\neq w} (1-|z|^{2})^{ \lambda}(1-|w |^{2})^{\alpha-\lambda} \frac{ |f(z)-f(w)|}{ |z-w|}<+\infty. $$
\end{theorem}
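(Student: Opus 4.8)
The plan is to prove the two implications separately. Throughout, observe that for holomorphic $f$ the Euclidean gradient norm $|\nabla f(z)|$ equals the norm $\|Df(z)\|$ of the differential (by Cauchy--Schwarz), that $1-|z|$ and $1-|z|^{2}$ are comparable, and hence that the first condition is exactly $\|f\|_{1,\alpha}<\infty$ in the notation introduced above. Every step below is dimension-free, so the same argument also yields the corresponding statement on the unit ball of an infinite-dimensional Hilbert space.

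\textbf{The implication ``derivative $\Rightarrow$ difference quotient''.} Suppose $C:=\sup_{x\in\mathbb{B}}(1-\|x\|)^{\alpha}\|Df(x)\|<\infty$. Given $z\neq w$ in $\mathbb{B}$, set $\gamma(t)=(1-t)w+tz$ and integrate along this segment:
$$f(z)-f(w)=\int_{0}^{1}Df(\gamma(t))(z-w)\,dt,$$
so $|f(z)-f(w)|\le\|z-w\|\int_{0}^{1}\|Df(\gamma(t))\|\,dt\le C\|z-w\|\int_{0}^{1}(1-\|\gamma(t)\|)^{-\alpha}\,dt$. By convexity of the norm, $\|\gamma(t)\|\le(1-t)\|w\|+t\|z\|$, hence $1-\|\gamma(t)\|\ge(1-t)a+tb$ with $a=1-\|w\|$ and $b=1-\|z\|$; since $\alpha>0$,
$$(1-\|z\|)^{\lambda}(1-\|w\|)^{\alpha-\lambda}\,\frac{|f(z)-f(w)|}{\|z-w\|}\le C\,a^{\alpha-\lambda}b^{\lambda}\int_{0}^{1}\bigl((1-t)a+tb\bigr)^{-\alpha}\,dt.$$
The key point is that the right-hand side, apart from the factor $C$, is homogeneous of degree $0$ in $(a,b)$, so it depends only on $s:=b/a\in(0,\infty)$ and equals $\psi(s):=s^{\lambda}\int_{0}^{1}(1-t+ts)^{-\alpha}\,dt$. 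Evaluating the elementary integral (it is $\frac{s^{1-\alpha}-1}{(s-1)(1-\alpha)}$ for $\alpha\neq1$, $\frac{\log s}{s-1}$ for $\alpha=1$, with value $1$ at $s=1$), one checks that $\psi$ is continuous on $(0,\infty)$ and that its growth as $s\to0^{+}$ and as $s\to\infty$ is controlled precisely by the stated hypotheses: the behaviour at $0$ forces $\lambda\ge0$ if $0<\alpha<1$, $\lambda>0$ if $\alpha=1$, and $\lambda\ge\alpha-1$ if $1<\alpha\le2$, while the behaviour at $\infty$ forces $\lambda\le\alpha$, $\lambda<1$, $\lambda\le1$ respectively. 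Hence $\sup_{s>0}\psi(s)<\infty$, which is the asserted bound.

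\textbf{The implication ``difference quotient $\Rightarrow$ derivative''.} Suppose the difference-quotient supremum is a finite number $M$. Fix $x\in\partial\mathbb{B}$ and put $f_{x}(\zeta)=f(\zeta x)$ for $\zeta\in\mathbb{D}$. Choosing $z=\zeta x$ and $w=\eta x$ in the hypothesis yields, for $\zeta\neq\eta$ in $\mathbb{D}$,
$$(1-|\zeta|)^{\lambda}(1-|\eta|)^{\alpha-\lambda}\,\frac{|f_{x}(\zeta)-f_{x}(\eta)|}{|\zeta-\eta|}\le M,$$
that is, $f_{x}$ satisfies the one-dimensional case of the theorem, with a constant independent of $x$; this one-dimensional statement is Zhao's result for $n=1$ and contains the Hardy--Littlewood theorem \eqref{Thm:hardy-Littlewood-disc} and the Holland--Walsh theorem \eqref{Thm:Holland-Wolsh} as special cases. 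Therefore $\sup_{x\in\partial\mathbb{B}}\|f_{x}\|_{\mathcal{B}^{\alpha}(\mathbb{D})}=\|f\|_{3,\alpha}<\infty$, and Theorem \ref{main-first} gives $\|f\|_{1,\alpha}<\infty$, i.e. $\sup_{z\in\mathbb{B}}(1-\|z\|)^{\alpha}\|Df(z)\|<\infty$.

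\textbf{Where the difficulty lies.} The only delicate part is the bookkeeping in the first implication, namely matching the three admissible ranges of $\lambda$ to the integrability of $\psi$ at $0$ and at $\infty$; but this is purely computational once the degree-zero homogeneity has collapsed a two-variable estimate into the single variable $s=b/a$. The converse implication presents no real difficulty beyond invoking the one-dimensional result together with the equivalence $\|f\|_{1,\alpha}\asymp\|f\|_{3,\alpha}$ of Theorem \ref{main-first}; it is precisely this reduction to slices, rather than integral estimates special to $\mathbb{B}^{n}$, that removes the dependence on the dimension and lets the statement extend to the unit ball of an arbitrary Hilbert space.
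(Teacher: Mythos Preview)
The paper does not actually give its own proof of this statement: Theorem~\ref{Zhao} is quoted from Zhao's paper \cite{Zhao} as a known result on $\mathbb{B}^{n}$, and is used only as background before the new results in Theorem~\ref{Dai}. There is therefore nothing in the paper to compare your argument against.

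That said, your proof is correct and is essentially the standard route (and almost certainly what \cite{Zhao} does for the forward direction): integrate along the segment, reduce by homogeneity to the single-variable function $\psi(s)=s^{\lambda}\int_{0}^{1}(1-t+ts)^{-\alpha}\,dt$, and check boundedness at $s\to0^{+}$ and $s\to\infty$ against the three parameter ranges. Your observation that every step is dimension-free, so that the argument works verbatim on the unit ball of a Hilbert space, is exactly the point the paper asserts (without proof) in the paragraph following Theorem~\ref{Zhao}.

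One remark on the converse implication. Reducing to slices and invoking the one-dimensional case together with the equivalence $\|f\|_{1,\alpha}\asymp\|f\|_{3,\alpha}$ from Theorem~\ref{main-first} is valid, but unnecessarily elaborate and mildly self-referential (you are quoting Zhao for $n=1$ while proving Zhao for general $n$). The converse is in fact immediate: given $z\in\mathbb{B}$ and a unit vector $v$, set $w=z+tv$ and let $t\to0$ in the difference-quotient bound to obtain $(1-\|z\|)^{\alpha}\,|Df(z)(v)|\le M$ directly. This avoids both the slice reduction and any appeal to the one-variable theorem.
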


For more relative equivalent characterizations of Bloch type functions in the finite-dimensional Euclidean space, we refer to \cite{Chen-HuaiHui,Li-Wulan,Zhang-Chen,Chen} and references therein.

In \cite{Zhao}, Zhao also offered    some examples to show that the
conditions on $\alpha$ and $\lambda$ in Theorem \ref{Zhao}  cannot be improved. In fact, Theorem \ref{Zhao} does  hold for $\alpha=\lambda=0$ and hold for any $C^{1}$-function  defined on the unit ball of any infinite dimensional Hilbert space.  Recently, Dai and Wang in \cite{Dai} reveal the reason in theory that some equivalent characterizations of the Bloch type space  require extra conditions for $\alpha$.  In present paper, we shall show the main results in \cite{Dai} still hold for holomorphic functions defined in
  any infinite-dimensional Hilbert space.

Denote
 $$ S_{\alpha,\lambda}:=\Big\{ f\in H(\mathbb{B}):  \sup_{x,y\in \mathbb{B} , x\neq y} (1-\|x\|^{2})^{ \lambda}
 (1-\|y\|^{2})^{\alpha-\lambda}
\frac{ |f(x)-f(y) |}{ \|x-y\|}<+\infty\Big \}. $$
 \begin{theorem}\label{Dai}
\begin{enumerate}
\item  [(i)]
 Let $\alpha, \lambda$ be any real numbers satisfying the following properties:
\begin{enumerate}
\item  $0<\lambda< \alpha-1$ if $1 <\alpha\leq 2$;
\item  $0<\lambda \leq \alpha/2$ if $ \alpha>2$.
\end{enumerate}
Then $S_{\alpha,\lambda}=\mathcal{B}^{\lambda+1}$.
\item [(ii)] Let $\alpha, \lambda$ be any real numbers satisfying the following properties:
\begin{enumerate}
\item  $1<\lambda< \alpha $ if $1<\alpha\leq 2$;
\item  $\alpha/2 <\lambda\leq \alpha $ if $ \alpha>2$.
\end{enumerate}
Then $S_{\alpha,\lambda}= \mathcal{B}^{\alpha-\lambda+1}$.
 \item [(iii)] Let   $\alpha\geq1$.  Then $S_{\alpha,\lambda}= H^{\infty}$  for $\lambda=0$ or $\lambda=\alpha$.
 \end{enumerate}
\end{theorem}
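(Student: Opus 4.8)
The plan is to reduce everything to Theorem \ref{main-first} together with the one-variable machinery, so that the role of the extra hypotheses on $\alpha$ and $\lambda$ becomes transparent. First I would handle the two ``easy'' inclusions that hold regardless of the parameter restrictions. For $S_{\alpha,\lambda}\supseteq\mathcal B^{\beta}$ (with $\beta=\lambda+1$ in (i) and $\beta=\alpha-\lambda+1$ in (ii)), given $f\in\mathcal B^{\beta}$ one estimates $|f(x)-f(y)|$ by integrating $Df$ along the segment $[x,y]\subset\mathbb B$, using $\|Df(\xi)\|\le\|f\|_{1,\beta}(1-\|\xi\|)^{-\beta}$ and the elementary inequality controlling $\int_0^1(1-\|x+t(y-x)\|)^{-\beta}\,dt$ by a constant times $(1-\|x\|)^{\lambda}(1-\|y\|)^{\alpha-\lambda}/\|x-y\|$ when $\lambda\ge 0$, $\alpha-\lambda\ge 0$, $\lambda+(\alpha-\lambda)=\alpha\ge\beta-1$ — this is exactly the finite-dimensional computation of \cite{Zhao,Dai}, which goes through verbatim since it only ever involves the real segment joining two points. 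For part (iii), $S_{\alpha,\lambda}\supseteq H^\infty$ is immediate from $|f(x)-f(y)|\le 2\|f\|_{H^\infty}$ and $\|x-y\|\ge \big|\,\|x\|-\|y\|\,\big|$ after splitting into the cases $\|x-y\|\ge\tfrac12$ and $\|x-y\|<\tfrac12$.

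For the reverse inclusions I would slice. Fix $f\in S_{\alpha,\lambda}$ and $e\in\partial\mathbb B$, and set $F(z)=f(ze)$ for $z\in\mathbb D$. Restricting the defining supremum to $x=ze$, $y=we$ shows $F\in S_{\alpha,\lambda}(\mathbb D)$ with the same bound, so by the \emph{one-dimensional} results of Zhao/Dai (which are valid precisely under the stated hypotheses on $\alpha,\lambda$) we get $\sup_{z\in\mathbb D}(1-|z|^2)^{\beta}|F'(z)|<\infty$, uniformly in $e$. Since $zF'(z)=\mathcal Rf(ze)$, this says exactly $\|f\|_{2,\beta}<\infty$, and then Theorem \ref{main-first} upgrades this to $\|f\|_{1,\beta}<\infty$, i.e. $f\in\mathcal B^{\beta}$. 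The same slicing argument handles (iii): $F\in S_{\alpha,\lambda}(\mathbb D)$ forces $F\in H^\infty(\mathbb D)$ with norm independent of $e$ by the one-variable endpoint case, whence $\sup_{x\in\mathbb B}|f(x)|=\sup_{e}\sup_{z}|F(z)|<\infty$.

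The one genuine subtlety — and the step I expect to be the main obstacle — is verifying that the parameter hypotheses in Theorem \ref{Dai} are exactly what the one-variable theorems need on each slice, and in particular checking that no \emph{dimension} enters when passing back from slices to $\mathbb B$. In the finite-dimensional proofs of \cite{Zhao,Dai} the dimension $n$ typically appears through volume/Jacobian factors in an integral representation of $f$; here the point stressed in the introduction (``we need to overcome the restriction of dimension'') is that the slice reduction bypasses any such representation entirely — the forward direction uses only a real line segment, and the converse uses only complex one-dimensional slices plus Theorem \ref{main-first}, neither of which sees $n$. So the argument I would present is: (a) prove the $\supseteq$ inclusions by the segment estimate, tracking that only $\lambda\ge 0,\ \alpha-\lambda\ge 0,\ \alpha\ge\beta-1$ are used; (b) prove the $\subseteq$ inclusions by slicing to $\mathbb D$, invoking the sharp one-variable characterizations under the stated constraints, and then applying the equivalence $\|f\|_{2,\beta}\asymp\|f\|_{1,\beta}$ from Theorem \ref{main-first}; (c) remark that since every estimate is uniform over $e\in\partial\mathbb B$ and free of $n$, the conclusion holds verbatim in infinite dimensions and, indeed, for $C^1$ functions as noted for Theorem \ref{Zhao}.
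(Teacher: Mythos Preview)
Your treatment of (i) and (ii) is essentially the paper's argument. The paper phrases the reverse inclusion by first proving Lemma~\ref{Zhu} ($\mathcal B^{\beta}=H^\infty_{\beta-1}$ for $\beta>1$) via slicing and Theorem~\ref{main-first}, and then invoking the computations of \cite{Dai}; you skip the intermediate lemma and slice the $S_{\alpha,\lambda}$ condition itself, landing on $\|f\|_{3,\beta}<\infty$ and then using Theorem~\ref{main-first}. The two routes are equivalent, and your observation that no dimensional constant enters is exactly the point.

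There is, however, a genuine gap in your sketch of $H^\infty\subseteq S_{\alpha,\lambda}$ in part (iii). From $|f(x)-f(y)|\le 2\|f\|_{H^\infty}$ and $\|x-y\|\ge\big|\,\|x\|-\|y\|\,\big|$ alone you cannot control
\[
(1-\|y\|)^{\alpha}\,\frac{|f(x)-f(y)|}{\|x-y\|}
\]
in the regime $\|x-y\|<\tfrac12$: take any $y$ bounded away from $\partial\mathbb B$ and let $x\to y$ along a sphere $\|x\|=\|y\|$; your two ingredients give nothing better than $2\|f\|_{H^\infty}/\|x-y\|\to\infty$. Mere boundedness does not yield the needed near-diagonal Lipschitz control; you must use holomorphy. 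The paper closes this gap with the Schwarz lemma applied to $f\circ\varphi_a$, obtaining
\[
|f(x)-f(a)|\le 2\|\varphi_a(x)\|\le \frac{2\|x-a\|}{|1-\langle x,a\rangle|}\le \frac{2\|x-a\|}{1-\|x\|},
\]
from which $(1-\|y\|)^{\alpha}\,|f(x)-f(y)|/\|x-y\|\le 2(1-\|y\|)^{\alpha-1}\le 2$ follows immediately for $\alpha\ge 1$ (using the estimate with the roles of $x$ and $a=y$ suitably chosen). An alternative fix in the spirit of your segment method is to first note $H^\infty\subset\mathcal B^{1}$ via Schwarz--Pick, then split at $\|x-y\|=(1-\|y\|)/2$ rather than at $1/2$ and integrate $Df$ along the segment in the near case; but either way you need a derivative estimate, not just a sup bound. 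Your slicing argument for the reverse inclusion $S_{\alpha,0}\subseteq H^\infty$ is fine.
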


From Theorems \ref{Zhao} and \ref{Dai}, the space $S_{\alpha,\lambda}$ is described completely for  all  cases $0\leq\lambda\leq\alpha$.

In order to prove Theorem \ref{Dai}, we first  generalise  a result in  \cite{Zhu1} by Theorem \ref{main-first} as follows.
\begin{lemma}\label{Zhu}
 Let $\alpha>1$.  Then $\mathcal{B}^{\alpha}=H_{\alpha-1}^{\infty}$.
\end{lemma}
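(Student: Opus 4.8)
The plan is to prove the two inclusions $\mathcal{B}^{\alpha}\subseteq H_{\alpha-1}^{\infty}$ and $H_{\alpha-1}^{\infty}\subseteq\mathcal{B}^{\alpha}$ separately, reducing in each direction to a one-variable (or two-variable) statement that can be handled on slices, exactly as in the proof of Theorem \ref{main-first}. For the inclusion $H_{\alpha-1}^{\infty}\subseteq\mathcal{B}^{\alpha}$, suppose $f\in H_{\alpha-1}^{\infty}$, so $|f(x)|\leq C(1-\|x\|^{2})^{-(\alpha-1)}$ on $\mathbb{B}$. By Lemma \ref{Ouyang1}(iii) this gives $|\mathcal{R}f(x)|\leq C(\alpha)(1-\|x\|^{2})^{-\alpha}$, i.e. $\|f\|_{2,\alpha}<+\infty$; by the equivalence of the seminorms $\|\cdot\|_{1,\alpha}$ and $\|\cdot\|_{2,\alpha}$ established in Theorem \ref{main-first}, we conclude $\|f\|_{1,\alpha}<+\infty$, that is $f\in\mathcal{B}^{\alpha}$. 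This direction is essentially immediate given the machinery already in place.

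For the reverse inclusion $\mathcal{B}^{\alpha}\subseteq H_{\alpha-1}^{\infty}$, let $f\in\mathcal{B}^{\alpha}$, so $(1-\|x\|)^{\alpha}\|Df(x)\|\leq M$ for all $x\in\mathbb{B}$. Fix $x\in\mathbb{B}$, write $x=\|x\|x'$ with $x'\in\partial\mathbb{B}$, and pass to the slice function $F(z)=f(zx')$ on $\mathbb{D}$, noting $F'(z)=Df(zx')(x')$, hence $(1-|z|^{2})^{\alpha}|F'(z)|\leq 2^{\alpha}(1-|z|)^{\alpha}\|Df(zx')\|\leq 2^{\alpha}M$. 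Thus $F\in\mathcal{B}^{\alpha}(\mathbb{D})$ with norm bounded independently of $x'$. Now I would integrate $F'$ along the radius from $0$ to $\|x\|$: since $\alpha>1$,
\[
|F(\|x\|)-F(0)|\leq\int_{0}^{\|x\|}|F'(t)|\,dt\leq 2^{\alpha}M\int_{0}^{\|x\|}(1-t)^{-\alpha}\,dt\leq\frac{2^{\alpha}M}{\alpha-1}\,(1-\|x\|)^{-(\alpha-1)},
\]
so $|f(x)|\leq|f(0)|+\dfrac{2^{\alpha}M}{\alpha-1}(1-\|x\|)^{-(\alpha-1)}$, which (after comparing $(1-\|x\|)$ and $(1-\|x\|^{2})$, harmless since these are comparable up to a factor $2$) gives $\sup_{x}(1-\|x\|)^{\alpha-1}|f(x)|<+\infty$, i.e. $f\in H_{\alpha-1}^{\infty}$.

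The argument is routine; the only point requiring care is the hypothesis $\alpha>1$, which is exactly what makes the integral $\int_{0}^{1}(1-t)^{-\alpha}\,dt$ diverge at the right rate $(1-\|x\|)^{-(\alpha-1)}$ rather than converging (which would happen for $\alpha<1$) or producing a logarithm (for $\alpha=1$); this is the analogue of the classical one-dimensional fact that $\mathcal{B}^{\alpha}(\mathbb{D})=H^{\infty}_{\alpha-1}$ for $\alpha>1$. I do not anticipate a genuine obstacle: the infinite-dimensionality is absorbed entirely by slicing, and the seminorm equivalences from Theorem \ref{main-first} together with Lemma \ref{Ouyang1}(iii) do all the work in the nontrivial direction. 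One may alternatively cite the slice reduction together with the one-variable result directly, but carrying out the radial integration explicitly keeps the proof self-contained.
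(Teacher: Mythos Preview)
Your proof is correct and follows essentially the same route as the paper: reduce both inclusions to one-variable facts via slices, and invoke Theorem \ref{main-first} (through $\|\cdot\|_{2,\alpha}$) to pass from the radial derivative to the full Fr\'echet derivative. The only cosmetic difference is that the paper cites \cite[Proposition 7]{Zhu1} in both directions, whereas you use Lemma \ref{Ouyang1}(iii) for $H_{\alpha-1}^{\infty}\subseteq\mathcal{B}^{\alpha}$ and carry out the radial integration explicitly for $\mathcal{B}^{\alpha}\subseteq H_{\alpha-1}^{\infty}$, which amounts to reproving that one-variable proposition.
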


\begin{proof}
Let  $x \in \partial \mathbb{B}$ and $f\in H_{\alpha-1}^{\infty}$ with $\|f\|_{H_{\alpha-1}^{\infty}}=1$.
Let us consider the holomorphic function $ F: \mathbb{D}\rightarrow \mathbb{C}$ given by $F(z)=f(zx)$.
 Then $( 1- |z|^{2} )^{ \alpha-1}|F(z)|\leq1$.
By \cite[Proposition 7]{Zhu1}, there exists some constant $C>0$ such that
$$ (1-|z|^{2})^{\alpha}|F'(z)|\leq C, \ \forall z\in \mathbb{D},$$
that is
$$( 1-|z|^{2})^{\alpha}|Df(zx)(x)|\leq C, \ \forall z\in \mathbb{D},$$
which implies
$$( 1-\|y\|^{2})^{\alpha}|\mathcal{R} f(y)|\leq C, \ \forall y\in \mathbb{B}.$$
Applying  Theorem \ref{main-first}, if follows that $f\in \mathcal{B}^{\alpha} $.

Conversely, let  $f\in \mathcal{B}^{\alpha} $, thus we have  $F\in \mathcal{B}^{\alpha}$,
which is also    in $H_{\alpha-1}^{\infty}$ by \cite[Proposition 7]{Zhu1} again. Consequently, we have $f\in H_{\alpha-1}^{\infty}$.
 \end{proof}

\begin{proof}[Proof of Theorem \ref{Dai}]
(i) Let  $f\in \mathcal{B}^{\lambda+1}$. For $x, y \in \mathbb{B}$,
we choose a path $\gamma(t)=tx+(1-t)y, t\in [0,1]$ connecting $x$ and $y$. Then  it follows that
\begin{eqnarray*}\label{norm}
 |f(x)-f(y)|
  &=& \Big|\int_{0}^{1}\frac{d}{dt}f(\gamma(t))dt \Big|
  \\
  &=& \Big|\int_{0}^{1}Df(\gamma(t))(x-y)dt \Big|
  \\
 &\leq &\int_{0}^{1}\Big|Df(\gamma(t))(x-y)\Big| dt
 \\
&\leq &  \int_{0}^{1}\|D f(\gamma(t))\| \|x-y\| dt
  \\
  &\leq & C \|x-y\| \int_{0}^{1} \frac{1}
  {(1-\|\gamma(t) \|^{2})^{\lambda+1}}dt
   \\
   &\leq & C \|x-y\| \int_{0}^{1} \frac{1}
   {(1-t\|x\|-(1-t)\|y\| )^{\lambda+1}}dt.   \end{eqnarray*}
From the proof of  \cite[Theorem 3.1]{Dai}, we have
$$ \int_{0}^{1} \frac{1} {(1-t\|x\|-(1-t)\|y\|)^{\lambda+1}}dt \leq
 \frac{ C}{(1- \|x\|^{2})^{ \lambda}(1- \|y\|^{2})^{\alpha-\lambda}}.$$
Hence,
$$  |f(x)-f(y)|   \leq \frac{ C  \|x-y\| }
 {(1- \|x\|^{2})^{ \lambda}(1- \|y\|^{2})^{\alpha-\lambda}},$$                    which shows that $f \in S_{\alpha,\lambda} $.

Conversely,  the method     in \cite[Theorem 3.1]{Dai}
can applied  word by word to   prove  the inclusion
$S_{\alpha,\lambda}   \subseteq\mathcal{B}^{\lambda+1}$
by Lemma \ref{Zhu}.

(ii) follows by using (i).
It is easy to check (iii) if we can prove that
\begin{eqnarray}  \label{inequa}
|f(x)-f(a) |\leq    2\frac{\|x-a\|}{ 1-\|x\|}, \ \  \forall x,a\in \mathbb{B},
\end{eqnarray}
for holomorphic functions $f\in H^{\infty}$ with $\|f\|_{H^{\infty}}=1$.

Let us  show   inequality (\ref{inequa}).  From the Schwarz lemma for holomorphic functions, we have
$$ |f(x)-f(0) |\leq  2\|x\|, \ \  \forall x\in \mathbb{B}.$$
Applying this inequality to the holomorphic function $f\circ\varphi_{a}$, it holds that
$$ |f(x)-f(a) |\leq  2\| \varphi_{a}(x)\|\leq  2\frac{\|a-x\|}{|1-\langle x,a\rangle|}\leq  2\frac{\|x-a\|}{ 1-\|x\|},\ \  \forall x,a\in \mathbb{B},$$
as desired.
\end{proof}

$\bold{Acknowledgemants}$ \
The main result of this work is part of the author¡¯s Ph.D. thesis and
completed in November 2016. The author is very grateful to his advisor  for helpful discussions
and to the anonymous referees for valuable suggestions for the improvement of the manuscript.
\bibliographystyle{amsplain}

\begin{thebibliography}{99}
\bibitem{Blasco} O. Blasco, P. Galindo, A. Miralles: \textit{Bloch functions on the unit ball of an infinite dimensional Hilbert space}.
J. Funct. Anal. 267 (2014), no. 4, 1188--1204. Zbl 1293.32010, MR3217061

\bibitem{Chen-HuaiHui} H. Chen:
 \textit{Characterizations of $\alpha$-Bloch functions on the unit ball without use of derivative}.
  Sci. China Ser. A 51 (2008), no. 11, 1965--1981. Zbl 1187.32004, MR2447421

\bibitem{Chen}
 S. Chen, S. Ponnusamy,  A. Rasila:  \textit{On characterizations of Bloch-type, Hardy-type and Lipschitz-type spaces}.
  Math. Z. 279 (2015), no. 1-2, 163-183. Zbl 1314.30116, MR3299847

\bibitem{Dai}J. Dai,  B. Wang:
\textit{Characterizations of some function spaces associated with Bloch type spaces on the unit ball of $\mathbb{C}^{n}$}.
    J. Inequal. Appl. 2015, 2015:330, 10 pp. Zbl 1336.32006,  MR3411320

\bibitem{Deng} F. Deng, C.  Ouyang:
\textit{Bloch spaces on bounded symmetric domains in complex Banach spaces}.
Sci. China Ser. A 49 (2006), no. 11, 1625--1632. Zbl 1114.32002, MR2288219


\bibitem{Graham-Kohr} I.   Graham, G. Kohr:
\textit{Geometric function theory in one and higher dimensions}.
Monographs and Textbooks in Pure and Applied Mathematics, 255. Marcel Dekker, Inc., New York, 2003. MR2017933


\bibitem{Hahn} K. T. Hahn:  \textit{Holomorphic mappings of the hyperbolic space into the complex Euclidean space and the Bloch theorem}.
    Canad. J. Math 27 (1975), 446--458.  Zbl 0269.32014, MR0466641

\bibitem{HL}G. H. Hardy, J. E. Littlewood:
\textit{Some properties of fractional integrals II}.
Math. Z. 34 (1932), no. 1, 403--439. MR1545260

\bibitem{Holland-Walsh} F. Holland, D. Walsh:
 \textit{Criteria for membership of Bloch space and its subspace, BMOA}.
  Math. Ann. 273 (1986), no. 2, 317--335. Zbl 0561.30025, MR0817885

 \bibitem{Krantz}S. G. Krantz:
 \textit{Lipschitz spaces, smoothness of functions, and approximation theory}.
 Exposition. Math. 1 (1983), no. 3, 193--260. MR0782608

 \bibitem{Krantz-Ma}S. G. Krantz, D. Ma:   \textit{ Bloch functions on strongly pseudoconvex domains}.
 Indiana Univ. Math. J. 37 (1988), no. 1, 145--163. MR0942099

 \bibitem{Lehto} O. Lehto,  K. I. Virtanen:
\textit{Boundary behaviour and normal meromorphic functions}.
Acta Math. 97 (1957), 47--65. MR0087746

 \bibitem{Li-Wulan} S. Li, H. Wulan:
\textit{Characterizations of $\alpha$-Bloch spaces on the unit ball}. J. Math. Anal. Appl. 343 (2008), no. 1, 58--63.
 Zbl 1156.32002, MR2409457

\bibitem{Nowak} M. Nowak:
\textit{Bloch space and M\"{o}bius invariant Besov spaces on the unit ball of $\mathbb{C}^{n}$}.
Complex Variables Theory Appl. 44 (2001), no. 1, 1--12. MR1826712


\bibitem{PHolland}M. Pavlovi\'{c}:
 \textit{On the Holland-Walsh characterization of Bloch functions}.
Proc. Edinb. Math. Soc. (2) 51 (2008), no. 2, 439--441. Zbl 1165.30016, MR2465917

\bibitem{RenTu} G. Ren,  C. Tu:
\textit{Bloch space in the unit ball of $\mathbb{C}^{n}$}.
Proc. Amer. Math. Soc. 133 (2005), no. 3, 719--726. Zbl 1056.32005, MR2113920

\bibitem{Ren-Xu} G. Ren,  Z. Xu:
\textit{Slice Lebesgue measure of quaternions}.
Adv. Appl. Clifford Algebr. 26 (2016), no. 1, 399--416. Zbl 1337.30061, MR3460007


 \bibitem{Rudin} W. Rudin:
 \textit{Function theory in the unit ball of $\mathbb{C}^{n}$}.
 Reprint of the 1980 edition. Classics in Mathematics. Springer-Verlag, Berlin, 2008. MR2446682

 \bibitem{Timoney1} R. M. Timoney:
  \textit{Bloch functions in several complex variables I}.
   Bull. Lond. Math. Soc. 12 (1980) 241--267.  MR0576974

\bibitem{Timoney2} R. M. Timoney:  \textit{Bloch functions in several complex variables II}.
J. Reine Angew. Math. 319 (1980) 1--22. MR0586111

\bibitem{Yang}W. Yang, C, Ouyang:
\textit{Exact location of $\alpha$-Bloch spaces in $L^{p}_{a}$ and $H^{p}$ of a complex unit ball}.
Rocky Mountain J. Math. 30 (2000), no. 3, 1151--1169. Zbl 0978.32002, MR1797836

 \bibitem{Zhang-Chen}M. Zhang, H. Chen:
\textit{Equivalent characterizations of $\alpha$-Bloch functions on the unit ball}.
Acta Math. Sin. (Engl. Ser.) 27 (2011), no. 12, 2395--2408. Zbl 1262.32002, MR2853797

\bibitem{Zhao} R. Zhao:
 \textit{A characterization of Bloch-type spaces on the unit ball of $\mathbb{C}^{n}$}.
J. Math. Anal. Appl. 330 (2007), no. 1, 291--297. MR2302923

\bibitem{Zhu1}K. Zhu:  \textit{Bloch type spaces of analytic functions}.
Rocky Mountain J. Math. 23 (1993), no. 3, 1143--1177. MR1245472

\bibitem{Zhu}K. Zhu:  \textit{Spaces of Holomorphic Functions in the Unit Ball}.
 Grad. Texts in Math., vol. 226, SpringerVerlag, New York, 2005. Zbl 1067.32005, MR2115155
\end{thebibliography}

\vskip 10mm

\end{document}